\newtheorem{theorem}{\bf Theorem}[section]
\newtheorem{Lem}[theorem]{\bf Lemma}
\newtheorem{Prop}[theorem]{\bf Proposition}
\newtheorem{Def}[theorem] {\bf Definition}
\newtheorem{Cor}[theorem]{\bf Corollary}
\theoremstyle{remark}
\newtheorem{remark} [theorem]{\bf Remark}
\theoremstyle{definition}
\newcommand{\Extend}[5]{\ext@arrow0099{\arrowfill@#1#2#3}{#4}{#5}}
\begin{document}

 \title{ High-order  Cheeger's inequality on domain
 }
\author{{\sc Shumao Liu}\\
         { \small School of Statistics and Mathematics}\\
         { \sc\small  Central University of Finance and Economics}\\
         {\sc\small  Beijing,\ China,\ 100081}\\
         {\small (lioushumao@163.com ) }\\
         \date{}
        }

\maketitle



\begin{abstract}
We study the relationship of higher order variational eigenvalues of $p$-Laplacian and the higher order Cheeger constants. The asymptotic behavior of the $k$-th Cheeger constant is investigated. Using methods developed in \cite{2}, we obtain the high-order Cheeger's inequality of $p$-Laplacian on domain $h_k^p(\Omega)\leq C \lambda_{k}(p,\Omega)$.
\vspace{2.5mm}

\noindent{\textit{\textbf{Keywords}}}: {\small High order Cheeger's inequality; eigenvalue problem;
 $p$-Laplacian}
\end{abstract}

\section{\sc Introduction.}
\setcounter{section}{1}\setcounter{equation}{0}

\quad \,\,\;Let $\Omega\subset\mathbb{R}^n$ be a bounded open domain. The minimax of the so-called Rayleigh quotient
\begin{equation}\label{lambda}
\lambda_{k}(p,\Omega)=\inf_{A\in \Gamma_{k,p}}\max_{u\in A}\displaystyle \frac{\int_{\Omega}{|\nabla u|^pdx}}{\int_{\Omega}|u|^pdx},\ (1<p<\infty),
\end{equation}
leads to a nonlinear eigenvalue problem, where
$$
\Gamma_{p,k}=\{A\in  {W^{1,p}_0}(\Omega)\backslash \{0\}| A\cap\{\|u\|_p=1\}\mbox{is compact,}\ A \mbox{symmetric,}\ \gamma(A)\geq k\}.
$$
  The corresponding Euler-Lagrange equation is
\begin{equation}\label{p-laplacian equation}
 -\Delta_pu:=-\mbox{div}(|\nabla u|^{p-2}\nabla u)=\lambda|u|^{p-2}u,
\end{equation}
with Dirichlet boundary condition. This eigenvalue problem has been extensively studied in the literature.  When $p=2$, it is the familiar linear Laplacian equation$$
\Delta u+\lambda u=0.
$$ The solution of this Laplacian equation describes the shape of an eigenvibration, of frequency $\sqrt{\lambda}$, of homogeneous membrane stretched in the frame $\Omega$. It is well-known that the spectrum of Laplacian equation is discrete and all eigenfunctions form an orthonormal basis for $L^2(\Omega)$ space.
For general $1<p<\infty$, the first eigenvalue $\lambda_1(p,\Omega)$ of $p$-Laplacian $-\Delta_p$ is simple and isolated. The second eigenvalue $\lambda_2(p,\Omega)$ is well-defined and has a ``variational characterization", see \cite{20}. It has exactly 2 nodal domains, c.f.\cite{14}. However, we know little about the higher eigenvalues and eigenfunctions of the $p$-Laplacian when $p\not=2$. It is unknown  whether the variational eigenvalues (\ref{lambda}) can exhaust the spectrum  of equation (\ref{p-laplacian equation}). In this paper, we only discuss the variational eigenvalues (\ref{lambda}). For (\ref{lambda}), there are  asymptotic estimates, c.f.\cite{17} and \cite{18}. \cite{21}, \cite{22}, and \cite{23} discuss the $p$-Laplacian eigenvalue problem as $p\rightarrow\infty$ and $p\rightarrow 1$.

The Cheeger's constant which was  first studied by J.Cheeger in \cite{9}  is defined by
\begin{equation}\label{cheeger inequality}
h(\Omega):=\displaystyle\inf_{D\subseteq\Omega}\frac{|\partial D|}{|D|},\end{equation} with $D$ varying over all smooth subdomains of $\Omega$ whose boundary $\partial D$ does not touch $\partial\Omega$ and with $|\partial D|$ and $|D|$ denoting $(n-1)$ and $n$-dimensional Lebesgue measure of $\partial D$ and $D$. We call a set $C\subseteq \overline{\Omega}$ Cheeger set of $\Omega$, if $\displaystyle\frac{|\partial C|}{|C|}=h(\Omega)$. For more about the uniqueness and regularity, we refer to \cite{11}. Cheeger sets are of significant importance in the modelling of landslides, see \cite{24},\cite{25}, or in fracture mechanics, see \cite{26}.

 The classical Cheeger's inequality is about the first eigenvalue of Laplacian and the Cheeger constant(c.f.\cite{3})
$$
\lambda_{1}(2,\Omega)\geq \bigg(\frac{h(\Omega)}{2}\bigg)^2\quad\mbox{i.e.}\quad h(\Omega)\leq 2\sqrt{\lambda_{1}(2,\Omega)},
$$
which was extent to the $p$-Laplacian  in \cite{12}:
$$
\lambda_{1}(p,\Omega)\geq \bigg(\frac{h(\Omega)}{p}\bigg)^p.
$$
When $p=1$,  the first eigenvalue of $1$-Laplacian is defined by
\begin{equation}\label{1-laplace}
\lambda_{1}(1,\Omega):=\min_{0\not=u\in BV(\Omega)}\displaystyle\frac{\int_{\Omega}|Du|+\int_{\partial\Omega}|u|d\mathcal{H}^{n-1}}{\int_{\Omega}|u|dx},
\end{equation}
where $BV(\Omega)$ denotes the space of functions of bounded variation in $\Omega$.  From \cite{3}, $\lambda_{1}(1,\Omega)=h(\Omega)$. And, problem (\ref{cheeger inequality}) and problem (\ref{1-laplace}) are equivalent in the following sense: a function $u\in BV(\Omega)$ is a minimum of (\ref{1-laplace}) if and only if almost every level set is a Cheeger set. An important difference between $\lambda_1(p,\Omega)$ and $h_k(\Omega)$ is that the first eigenfunction of $p$-Laplacian is unique while the uniqueness of Cheeger set  depends  on the topology of the domain. For counterexamples, see \cite[Remark 3.13]{4}.   For more results about the eigenvalues of 1-Laplacian, we refer to \cite{6} and \cite{7}.

As to the more general Lipschitz domain, we need the following definition of perimeter:
$$
P_{\Omega}(E):=\sup\bigg{\{}\int_E \mbox{div}\phi dx\bigg{|} \phi\in C^1_c(\Omega, \mathbb{R}^n), |\phi|\leq 1, \mbox{div} \phi\in L^{\infty}(\Omega)\bigg{\}}.
$$
For convenience, we denote $|\partial E|:=P_{\Omega}(E)$. The higher order Cheeger's constant is defined by
$$
h_k(\Omega):=\inf\{\lambda\in \mathbb{R}^+|\exists \ E_1,E_2,\cdots,E_k\subseteq \Omega, E_i\cap E_j=\emptyset,\mbox{µ±}i\not=j,\max_{1,2,\cdots,k}\frac{|\partial E_i|}{|E_i|}\leq \lambda \};
$$
if$|E|=0$, we set $\displaystyle\frac{|\partial E|}{|E|}=+\infty$.
An equivalent characterization of the higher order Cheeger constant is (see\cite{4})
$$
h_k(\Omega):=\inf_{\mathfrak{D}_k}\max_{i=1,2,\cdots,k}h(E_i),
$$where $\mathfrak{D}_k$ are the set of all partitions of $\Omega$ with $k$ subsets. We set $h_1(\Omega):=h(\Omega)$. Obviously, if $R\subseteq \Omega$, then $h_k(\Omega)\leq h_k(R)$.

For the high-order Cheeger constants, there is a conjecture:
\begin{equation}\label{conjecture}
\lambda_{k}(p,\Omega)\geq \bigg(\frac{h_k(\Omega)}{p}\bigg)^p.\qquad \forall\ 1\leq k < +\infty, \ 1< p < +\infty.
\end{equation}
From \cite[Theorem 3.3]{14}, the second variational eigenfunction of $-\Delta_p$  has exactly two nodal domains, see also \cite{20}. It follows that (\ref{conjecture}) is hold for $k=1,2.$  We refer to \cite[Theorem 5.4]{4} for more details. However, by Courant's  nodal domain theorem, for other variational eigenfunctions, it is not necessary to have exactly $k$ nodal domains. Therefore, the inequality (\ref{conjecture}) on domain is still an open problem for $k>2$.

In this paper, we will get an asymptotic estimate for $h_k(\Omega)$ and establish high-order Cheeger's inequality for general $k$, and discuss the reversed inequality. To deal with the high-order Cheeger's inequality, we should give some restriction on domain.
\begin{Def} If there exists $n$-dimensional rectangle $R\subset \Omega$ and $c_1,c_2$ independent of $\Omega$, such that $c_1|R|\leq |\Omega|\leq c_2|R|$, we say   $R$  the comparable inscribed rectangle of $\Omega$.
\end{Def}

In graph theory, when $p=2$ the high-order Cheeger inequality  was  proved in \cite{1}, and was improved in \cite{2}.  In \cite{1}, using orthogonality of the eigenfunctions of Laplacian in $l_2$ and a random partitioning, they got $$
\frac{\lambda_k}{2}\leq \rho_G(k)\leq O(k^2)\sqrt{\lambda_k},
$$ where $\rho_G(k)$ is the $k$-way expansion constant, the analog of $h_k$. But, when it comes to the domain case, there is no such random partitioning. Therefore, we adapt the methods in \cite{2} to get:

\begin{theorem}\label{theorem-1}
Let $\Omega\subset\mathbb{R}^n$ be a  bounded  domain with a comparable inscribed rectangle. For $1<p<\infty$,  we have the following asymptotic estimates:
\begin{equation}\label{my Cheeger inequality}
h_k(\Omega)\leq C {k^{\frac{1}{n}}}\bigg(\frac{\lambda_1(p,\Omega)}{h_1(\Omega)}\bigg)^{\frac{q}{p}},\qquad \forall\ 1\leq k < +\infty£¬
\end{equation}where $C$  only depends on $n,p$, $\frac{1}{p}+\frac{1}{q}=1$.
\end{theorem}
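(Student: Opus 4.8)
The plan is to use the inscribed rectangle $R$ to exhibit a nearly optimal family of $k$ pairwise disjoint subdomains, and then to turn the resulting purely geometric bound into the spectral estimate $(\ref{my Cheeger inequality})$ by invoking the isoperimetric inequality and the $p$-Laplacian Cheeger inequality $\lambda_1(p,\Omega)\ge(h_1(\Omega)/p)^{p}$ recalled in the introduction. Throughout, $C_n$ denotes a constant depending only on $n$ that may change from line to line. First I would reduce to a rectangle: since $R\subseteq\Omega$, every family of $k$ pairwise disjoint subsets of $R$ is admissible for $\Omega$, so $h_k(\Omega)\le h_k(R)$ and it suffices to estimate $h_k(R)$. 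Writing $R=\prod_{i=1}^{n}[0,a_i]$, I would partition $R$ into congruent axis-parallel cells by dividing the $i$-th edge into $m_i:=\big\lceil k^{1/n}a_i|R|^{-1/n}\big\rceil$ equal parts, so that $\prod_i m_i\ge k$; since $R$ is comparable to $\Omega$ it is uniformly non-degenerate, so each $m_i\ge 1$ and every cell is comparable to a cube of side $(|R|/k)^{1/n}$, whence its perimeter-to-volume ratio is at most $C_n(k/|R|)^{1/n}$. Keeping $k$ of these cells gives, using $|\Omega|\le c_2|R|$,
\[
h_k(\Omega)\ \le\ h_k(R)\ \le\ C_n\,k^{1/n}|R|^{-1/n}\ \le\ C_n c_2^{1/n}\,k^{1/n}|\Omega|^{-1/n}.
\]

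Next I would replace $|\Omega|^{-1/n}$ by the spectral quotient. By the isoperimetric inequality, $|\partial D|/|D|\ge n|B_1|^{1/n}|D|^{-1/n}\ge n|B_1|^{1/n}|\Omega|^{-1/n}$ for every $D\subseteq\Omega$ (with $B_1$ the unit ball), hence $h_1(\Omega)\ge n|B_1|^{1/n}|\Omega|^{-1/n}$, that is $|\Omega|^{-1/n}\le C_n h_1(\Omega)$. On the other hand, the $p$-Cheeger inequality gives $\lambda_1(p,\Omega)/h_1(\Omega)\ge h_1(\Omega)^{p-1}/p^{p}$, and since $q/p=1/(p-1)$, raising to the power $q/p$ and rearranging yields $h_1(\Omega)\le p^{q}\big(\lambda_1(p,\Omega)/h_1(\Omega)\big)^{q/p}$. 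Combined with the displayed bound, this gives $h_k(\Omega)\le C_n c_2^{1/n}p^{q}\,k^{1/n}\big(\lambda_1(p,\Omega)/h_1(\Omega)\big)^{q/p}$, i.e. $(\ref{my Cheeger inequality})$ with $C=C(n,p)$ once $c_2$ is absorbed.

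I expect the main difficulty to be the partitioning step: producing, uniformly in $k$, a family of $k$ disjoint cells in $R$ whose perimeter-to-volume ratios are all $O(k^{1/n}|R|^{-1/n})$. The rounding $m_i\mapsto\lceil\,\cdot\,\rceil$ is harmless only when the short edges of $R$ are not small compared with $(|R|/k)^{1/n}$; for a very eccentric rectangle together with a small $k$ one is forced to take $m_i=1$ along the short directions, so a cell has side $a_{\min}=\min_i a_i$ there and its ratio is of order $1/a_{\min}$ rather than $(k/|R|)^{1/n}$. This degenerate regime must be treated separately: there $\Omega$ is itself thin, so the one-dimensional Poincaré inequality (or the divergence theorem) forces $h_1(\Omega)\gtrsim 1/a_{\min}$, while $\lambda_1(p,\Omega)\le\lambda_1(p,R)$ is comparable to $a_{\min}^{-p}$, and feeding these into the same chain as above recovers the estimate. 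Arranging that the bounds from the two regimes glue together with a constant depending only on $n$ and $p$ is the delicate point of the proof.
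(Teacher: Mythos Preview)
Your route is quite different from the paper's. The paper never partitions $R$ geometrically; instead it takes the first Dirichlet eigenfunction $f$ of $R$ and, adapting the threshold/level-set machinery of \cite{2}, manufactures $k^{n}$ disjointly supported truncations $f_{i,j}$ with $\mathcal{R}(f_{i,j})\le Ck^{p}\big(\mathcal{R}(f)/\phi(f)\big)^{q}$ (Theorems~\ref{th4} and~\ref{thm4.2}), and then converts each into a set via Lemma~\ref{lemma-1}. This produces $h_k(R)\le Ck^{1/n}\big(\lambda_1(p,R)/h_1(R)\big)^{q/p}$ with no case analysis on the shape of $R$: the ratio $\lambda_1(p,R)/h_1(R)$ already encodes the eccentricity. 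Your grid argument is much shorter when it applies, and yields the stronger intermediate bound $h_k(R)\le C_n(k/|R|)^{1/n}$, but that bound is \emph{false} for thin $R$ and small $k$: any set compactly contained in $R$ has isoperimetric ratio at least of order $1/a_{\min}$, which can be arbitrarily larger than $(k/|R|)^{1/n}$.

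The remedy you propose for the degenerate regime does not work. You claim that when $R$ is thin ``$\Omega$ is itself thin, so\ldots\ $h_1(\Omega)\gtrsim 1/a_{\min}$''. The hypothesis does not give this: being a comparable inscribed rectangle only controls $|R|$ against $|\Omega|$, not the shape of either. For instance, let $\Omega$ be the (thinly connected) union of $R=[0,\epsilon]\times[0,L]^{n-1}$ and a ball $B$ with $|B|=|R|$; then $c_1|R|\le|\Omega|\le c_2|R|$ with absolute constants, yet $h_1(\Omega)$ is governed by $B$ and is of order $|\Omega|^{-1/n}\ll 1/\epsilon=1/a_{\min}$, while $\lambda_1(p,\Omega)\le\lambda_1(p,B)\sim|\Omega|^{-p/n}$. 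Here your grid on $R$ yields only $h_k(\Omega)\le C/\epsilon$, whereas the right-hand side of (\ref{my Cheeger inequality}) is $\sim k^{1/n}|\Omega|^{-1/n}$, so your chain of inequalities genuinely fails. What rescues the theorem in such examples is not a sharper estimate on the thin $R$ but a \emph{different} inscribed rectangle (a near-cube inside the fat part of $\Omega$); your proposal does not provide one, and producing it in general with constants depending only on $n,p$ is a separate argument you have not supplied. The paper's eigenfunction construction sidesteps this entirely because its output is stated in terms of $\lambda_1(p,R)/h_1(R)$ rather than $|R|^{-1/n}$.
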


There are some lower bounds about the first eigenvalue of $p$-Laplacian, see \cite{19}. There is  lower bound  by the $h_k$ when $\Omega$ be a planar domain with  finite connectivity $k$.

\begin{theorem}[\cite{8}]
Let $(S,g)$ be a Riemannian surface, and let $D\subset S$ be a domain homeomorphic to a planar domain of finite connectivity $k$. Let $F_k$ be the family of relatively compact subdomains of $D$ with smooth boundary and with connectivity at most $k$. Let $$
h_k(D):=\inf_{D'\in F_k}\frac{|\partial D'|}{|D'|},
$$where $|D'|$ is the area of $D'$ and $|\partial D'|$ is the length of its boundary. Then,
$$
\lambda_{1}(p,D)\geq \bigg(\frac{h_k(D)}{p}\bigg)^p.
$$

\end{theorem}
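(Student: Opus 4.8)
The plan is to imitate the classical coarea proof of the Cheeger inequality for $-\Delta_p$, inserting one topological observation that uses the finite connectivity of $D$. It suffices to prove
\[
\int_D |\nabla u|^p\,dx \ \ge\ \Big(\frac{h_k(D)}{p}\Big)^p \int_D |u|^p\,dx
\]
for every $u\in W^{1,p}_0(D)$, since taking the infimum of the Rayleigh quotient then gives $\lambda_1(p,D)\ge (h_k(D)/p)^p$. Replacing $u$ by $|u|$ we may assume $u\ge 0$, and by density and mollification it is enough to treat $0\le u\in C^\infty_c(D)$; then, for a.e. $t>0$, the superlevel set $\Omega_t:=\{u>t\}$ is a relatively compact open subset of $D$ with $\partial\Omega_t=\{u=t\}$ a smooth hypersurface contained in $D$ (Sard's theorem).

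The heart of the matter is the claim
\[
|\partial\Omega_t|\ \ge\ h_k(D)\,|\Omega_t|\qquad\text{for a.e. } t>0,
\]
and this is where I would use the hypothesis on $D$. Given such a $t$, fill in every hole of $\Omega_t$ whose closure lies in $D$ (a ``hole'' being a connected component of the complement of $\Omega_t$), obtaining an open set $\widetilde\Omega_t$ with $\Omega_t\subseteq\widetilde\Omega_t\subset D$, $|\widetilde\Omega_t|\ge|\Omega_t|$, $\partial\widetilde\Omega_t\subseteq\partial\Omega_t$ (so $|\partial\widetilde\Omega_t|\le|\partial\Omega_t|$), and $\widetilde\Omega_t$ still relatively compact in $D$ with smooth boundary. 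Every remaining hole of $\widetilde\Omega_t$ necessarily meets the complement of $D$; since $D$ has connectivity $k$ it has only $k-1$ bounded complementary components, so $\widetilde\Omega_t$ has at most $k-1$ holes in total, distributed among its finitely many connected components --- each of which therefore has connectivity at most $k$ and hence lies in $F_k$. Applying the definition of $h_k(D)$ to each component and summing yields $|\partial\widetilde\Omega_t|\ge h_k(D)\,|\widetilde\Omega_t|$, and chaining the inequalities gives the claim.

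Granting the claim, I would conclude with the two standard estimates. By the coarea formula, $\int_D|\nabla(u^p)|\,dx=\int_0^\infty p\,t^{p-1}|\partial\Omega_t|\,dt$, which by the claim is at least $h_k(D)\int_0^\infty p\,t^{p-1}|\Omega_t|\,dt=h_k(D)\int_D u^p\,dx$, the last equality being the layer-cake formula for $u^p$. On the other hand $\nabla(u^p)=p\,u^{p-1}\nabla u$, so H\"older's inequality gives $\int_D|\nabla(u^p)|\,dx\le p\big(\int_D u^p\,dx\big)^{(p-1)/p}\big(\int_D|\nabla u|^p\,dx\big)^{1/p}$. Combining the two and dividing by $\big(\int_D u^p\,dx\big)^{(p-1)/p}$ gives $h_k(D)\big(\int_D u^p\,dx\big)^{1/p}\le p\big(\int_D|\nabla u|^p\,dx\big)^{1/p}$, which rearranges to the desired inequality.

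The step I expect to be delicate is the surgery in the middle paragraph: one must check carefully that filling holes keeps $\widetilde\Omega_t$ a relatively compact subdomain with smooth boundary and, above all, that the number of holes that cannot be filled is controlled by the connectivity of $D$ --- finiteness of the connectivity is precisely what makes this bound hold, since for an infinitely connected $D$ the superlevel sets of an eigenfunction may have unbounded connectivity and the argument breaks down. The remaining issue --- justifying the coarea and layer-cake identities for the eigenfunction --- is routine, either through the $C^\infty_c$ approximation above or through the coarea formula for Sobolev functions combined with the relative-perimeter formalism already introduced in the paper.
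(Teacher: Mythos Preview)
The paper does not supply a proof of this statement: it is quoted verbatim from Osserman~\cite{8} as a known result and followed only by a remark. So there is nothing to compare against directly.

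That said, your argument is correct and is essentially the classical one. Its analytic half --- the coarea formula applied to $u^p$, the layer-cake identity, and H\"older --- is exactly the computation the paper carries out in Lemma~2.1 (there without any connectivity hypothesis, yielding $\phi(S)\le p\,\mathcal R(\psi)^{1/p}$ for some level set $S$). What your proof adds is the topological surgery in the middle paragraph: filling the holes of $\Omega_t$ whose closures lie in $D$, and observing that every remaining hole must meet $S\setminus D$, hence there are at most $k-1$ of them because $D$ has connectivity $k$. This is precisely the idea behind Osserman's result, and your account of it is sound; the summation over the connected components of $\widetilde\Omega_t$ is the right way to land each piece in $F_k$.

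Two small points worth tightening in a final write-up. First, on an abstract Riemannian surface the phrase ``bounded complementary component'' should be interpreted via the given homeomorphism of $D$ onto a planar domain (as you implicitly do); this is harmless but should be said. Second, when you fill a hole you discard exactly the boundary components surrounding it, so $\partial\widetilde\Omega_t$ is a union of components of $\partial\Omega_t$ and remains smooth and compactly contained in $D$ --- you flag this as delicate, but it is straightforward once stated this way.
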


\begin{remark}
The results of theorem \ref{theorem-1} generalize the above theorem to more general cases.
\end{remark}

As to the reversed inequality,  if $\Omega\subset\mathbb{R}^n$ is convex,  the following lower bound (the Faber-Krahn inequality) for $h_k(\Omega)$ was proved in  \cite{4}:
\begin{equation}\label{Faber-Krahn inequality}
h_k(\Omega)\geq n\big(\frac{k\omega_n}{|\Omega|}\big)^{\frac{1}{n}},\ \forall\; k=1,2,\cdots,
\end{equation}where $\omega_n$ is the volume of the unit ball in $\mathbb{R}^n$.  Therefore
\begin{equation}\label{Faber-Krahn inequality-2}
0<h_1(\Omega)\leq h_2(\Omega)\leq\cdots\leq h_k(\Omega)\rightarrow +\infty, \mbox{as}\; k\rightarrow\infty.
\end{equation}
However, for general domain, inequalities (\ref{Faber-Krahn inequality}) and (\ref{Faber-Krahn inequality-2}) are not true at all for $p>1$. In fact, there are counter-examples in \cite{15} and \cite{16} to show that there exist domains such that $h_k(\Omega)\leq c,$ where $c$ depends only on $n$. Meanwhile, $\lambda_{k}(p,\Omega)\rightarrow +\infty$. Therefore, the reversed inequality of (\ref{my Cheeger inequality}) is not hold for general domain.

Let's  consider the convex domain. By the John ellipsoid theorem (c.f.\cite[theorem 1.8.2]{27}) and the definition of comparable inscribed rectangle, there exists  comparable inscribed rectangle $R$ for convex $\Omega$, such that $c_1|R|\leq |\Omega|\leq c_2|R|$, where $c_1,c_2$ depend only on n.

On the other hand, according to  \cite{17} and \cite{18}, for $1<p<+\infty$, there exist $C_1,C_2$ depending only on $p,n$, such that
\begin{equation}\label{weyl's asymptotic}
C_1\bigg(\frac{k}{|\Omega|}\bigg)^{\frac{1}{n}}\leq \lambda_{k}^{\frac{1}{p}}(p,\Omega)\leq C_2 \bigg(\frac{k}{|\Omega|}\bigg)^{\frac{1}{n}},\qquad \forall\ k \in\mathbb{N}.
\end{equation}

Therefore, if the domain is a bounded convex domain, combining Theorem\ref{theorem-1}, (\ref{Faber-Krahn inequality}) and (\ref{weyl's asymptotic}), the following inequality holds.

\begin{theorem}\label{theorem-2}
Let $\Omega\subset\mathbb{R}^n$ be a  bounded convex domain, then there exist $C_1,C_2$ depending only on $n$, such that
$$
C_1\bigg(\frac{k}{|\Omega|}\bigg)^{\frac{1}{n}}\leq h_{k}(\Omega)\leq C_2 \bigg(\frac{k}{|\Omega|}\bigg)^{\frac{1}{n}},\qquad \forall\ k \in\mathbb{N}.
$$
\end{theorem}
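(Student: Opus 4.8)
The plan is to prove the two inequalities separately: the lower bound comes at once from the Faber--Krahn inequality (\ref{Faber-Krahn inequality}), while the upper bound is obtained by feeding the one-eigenvalue ($k=1$) versions of (\ref{weyl's asymptotic}) and (\ref{Faber-Krahn inequality}) into Theorem \ref{theorem-1}. Since $h_k(\Omega)$ carries no reference to $p$, I would fix a convenient exponent once and for all, say $p=2$ (so $q=2$ and $q/p=1$); this is exactly what forces the constants produced by Theorem \ref{theorem-1} and by (\ref{weyl's asymptotic}), which a priori depend on $n$ and $p$, to collapse to constants depending only on $n$.

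For the upper bound, first recall from the discussion preceding the statement that, by the John ellipsoid theorem, a bounded convex $\Omega$ admits a comparable inscribed rectangle with comparison constants depending only on $n$, so Theorem \ref{theorem-1} applies and gives
\[
h_k(\Omega)\le C\,k^{\frac{1}{n}}\Big(\frac{\lambda_1(p,\Omega)}{h_1(\Omega)}\Big)^{\frac{q}{p}},\qquad C=C(n,p).
\]
Next I would estimate the numerator and denominator of the ratio: taking $k=1$ in (\ref{weyl's asymptotic}) yields $\lambda_1(p,\Omega)\le C_2^{\,p}\,|\Omega|^{-p/n}$, and taking $k=1$ in (\ref{Faber-Krahn inequality}) yields $h_1(\Omega)\ge n\omega_n^{1/n}\,|\Omega|^{-1/n}$. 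Substituting,
\[
\Big(\frac{\lambda_1(p,\Omega)}{h_1(\Omega)}\Big)^{\frac{q}{p}}\le \Big(\frac{C_2^{\,p}}{n\omega_n^{1/n}}\Big)^{\frac{q}{p}}\,|\Omega|^{-\frac{(p-1)q}{pn}}.
\]
The decisive point is the exponent identity $(p-1)q=p$, which holds since $q=p/(p-1)$; hence the right-hand side is a constant times $|\Omega|^{-1/n}$, and combining with the previous display (fixing $p=2$) gives $h_k(\Omega)\le C_2\,(k/|\Omega|)^{1/n}$ with $C_2$ depending only on $n$.

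For the lower bound there is essentially nothing to do: since $\Omega$ is convex, (\ref{Faber-Krahn inequality}) applies directly and reads $h_k(\Omega)\ge n\omega_n^{1/n}(k/|\Omega|)^{1/n}$, so one takes $C_1=n\omega_n^{1/n}$. The only genuinely nontrivial point in the whole argument is the exponent bookkeeping in the upper bound --- making sure the powers of $|\Omega|$ coming from the bound on $\lambda_1$ and the bound on $h_1$ combine to exactly $|\Omega|^{-1/n}$; this is precisely the H\"older-conjugacy identity $(p-1)q=p$, and it is what makes the final estimate scale-invariant in the homogeneity $h_k(\Omega)\sim (k/|\Omega|)^{1/n}$ demanded by the statement. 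Everything else is routine: verifying that the hypotheses of Theorem \ref{theorem-1}, (\ref{Faber-Krahn inequality}) and (\ref{weyl's asymptotic}) are met for convex domains, and tracking that every constant depends only on $n$.
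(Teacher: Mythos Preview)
Your argument is correct and follows essentially the same route as the paper: the lower bound is exactly the Faber--Krahn inequality (\ref{Faber-Krahn inequality}), and the upper bound comes from Theorem~\ref{theorem-1} after inserting the $k=1$ cases of (\ref{weyl's asymptotic}) and (\ref{Faber-Krahn inequality}), with the H\"older identity $(p-1)q=p$ collapsing the power of $|\Omega|$ to $-1/n$. Your explicit choice of a fixed $p$ (e.g.\ $p=2$) to force the constants to depend only on $n$ is a clarification the paper leaves implicit.
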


By the  two theorems above, we get bilateral estimate of $h_k(\Omega)$ with respect to $\lambda_{k}(p,\Omega)$.

\begin{Cor}\label{cor-1}
Let $\Omega\subset\mathbb{R}^n$ be a bounded convex domain. Then,  for $1<p<\infty$,  there exist $C_1,C_2$ depending only on $p,n$, such that
$$
 C_1 \lambda_{k}(p,\Omega)\leq h_k^p(\Omega)\leq C_2 \lambda_{k}(p,\Omega).
$$
\end{Cor}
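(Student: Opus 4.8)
The plan is to sandwich $h_k(\Omega)$ between two constant multiples of $\lambda_k(p,\Omega)^{1/p}$, by feeding the Weyl-type asymptotics \eqref{weyl's asymptotic} into the bilateral bound of Theorem \ref{theorem-2}, and then raising everything to the $p$-th power. Since $\Omega$ is a bounded convex domain, the John ellipsoid theorem (as noted in the excerpt) supplies a comparable inscribed rectangle $R$ with $c_1|R|\le|\Omega|\le c_2|R|$ where $c_1,c_2$ depend only on $n$; hence Theorem \ref{theorem-1} applies and Theorem \ref{theorem-2} is available. First I would invoke Theorem \ref{theorem-2}: there are constants $C_1',C_2'$ depending only on $n$ with
\[
C_1'\Big(\frac{k}{|\Omega|}\Big)^{1/n}\le h_k(\Omega)\le C_2'\Big(\frac{k}{|\Omega|}\Big)^{1/n}.
\]
Next I would invoke \eqref{weyl's asymptotic}: there are constants $C_1'',C_2''$ depending only on $p,n$ with
\[
C_1''\Big(\frac{k}{|\Omega|}\Big)^{1/n}\le \lambda_k(p,\Omega)^{1/p}\le C_2''\Big(\frac{k}{|\Omega|}\Big)^{1/n}.
\]

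Combining the two displays eliminates the geometric quantity $(k/|\Omega|)^{1/n}$: from the lower bound for $h_k$ and the upper bound for $\lambda_k^{1/p}$ we get $h_k(\Omega)\ge (C_1'/C_2'')\,\lambda_k(p,\Omega)^{1/p}$, and from the upper bound for $h_k$ and the lower bound for $\lambda_k^{1/p}$ we get $h_k(\Omega)\le (C_2'/C_1'')\,\lambda_k(p,\Omega)^{1/p}$. Raising both inequalities to the power $p>0$ (which preserves order since both sides are positive) yields
\[
\Big(\frac{C_1'}{C_2''}\Big)^{p}\lambda_k(p,\Omega)\le h_k^p(\Omega)\le \Big(\frac{C_2'}{C_1''}\Big)^{p}\lambda_k(p,\Omega),
\]
so setting $C_1:=(C_1'/C_2'')^{p}$ and $C_2:=(C_2'/C_1'')^{p}$, both depending only on $p$ and $n$, finishes the argument.

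This corollary is essentially a bookkeeping consequence of the two preceding theorems, so there is no genuine obstacle; the only point requiring a line of care is checking that Theorem \ref{theorem-2} is legitimately applicable, i.e. that convexity does force the existence of a comparable inscribed rectangle with dimensional constants — but this is exactly the John ellipsoid remark already recorded in the text, so it can simply be cited. I would also remark that the monotonicity and blow-up \eqref{Faber-Krahn inequality-2} for convex $\Omega$ guarantee $h_k(\Omega)>0$, so that the ratios above are well-defined and one is never dividing by zero.
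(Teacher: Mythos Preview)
Your argument is correct and follows exactly the paper's route: combine the two-sided estimate of Theorem~\ref{theorem-2} with the Weyl asymptotics~\eqref{weyl's asymptotic} to eliminate $(k/|\Omega|)^{1/n}$, then take $p$-th powers. The paper's proof is the same one-line bookkeeping, only written more tersely.
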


\begin{remark}
From \cite{4}, when $\Omega\subset \mathbb{R}^n$ be a lipschitz domain, there is
$$
\limsup_{p\rightarrow 1}\lambda_k(p,\Omega)\leq h_k(\Omega).
$$
\end{remark}

This paper is arranged as follows: In section 2, we get some variants of Cheeger's inequalities.  Section 3 is devoted to prove Theorem \ref{theorem-1} and Theorem \ref{theorem-2}.

\vspace{5mm}

\section{\sc Some variants of Cheeger's inequalities}

\setcounter{equation}{0}

 \quad \ In this section, we will give several variants of Cheeger's inequalities. For a subset $S\subseteq \Omega$, define $\displaystyle\phi(S)=\frac{|\partial S|}{|S|}$. The  Rayleigh quotient of $\psi$ is defined by $\displaystyle\mathcal{R}(\psi):=\frac{\int_{\Omega}|\nabla\psi|^pdx}{\int_{\Omega}|\psi|^pdx}$. We define the support of $\psi$, $Supp(\psi)=\{x\in\Omega|\psi(x)\not=0\}$. If $Supp(f)\cap Supp(g)=\emptyset$, we say $f$ and $g$ are disjointly supported.
Let  $\Omega_\psi(t):=\{x\in\Omega|\psi(x)\geq t\}$ be the level set of $\psi$. For an interval $I=[t_1, t_2]\subseteqq \mathbb{R}$. $|I|=|t_2-t_1|$ denote the length of $I$. For any function $\psi$, $\Omega_\psi(I):=\{x\in \Omega|\psi(x)\in I\}$, $\displaystyle\phi(\psi):=\min_{t\in \mathbb{R}}\phi(\Omega_{\psi}(t))$. $t_{opt}:=\min\{t\in\mathbb{R}|\phi(\Omega_\psi(t))=\phi(\psi)\}$.

\begin{Lem}\label{lemma-1}
For any $\psi\in W_0^{1,p}(\Omega)$, there exist a subset  $S\subseteq Supp\psi$, such that $\phi(S)\leq p({\mathcal{R}(\psi)})^{\frac{1}{p}}$.
\end{Lem}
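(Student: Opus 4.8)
The plan is to use the coarea formula together with an averaging (layer-cake) argument, which is the standard route to the classical Cheeger inequality and which adapts to the $p$-Laplacian. Without loss of generality I would assume $\psi \geq 0$ (replace $\psi$ by $|\psi|$, which lies in $W_0^{1,p}(\Omega)$ with $|\nabla |\psi|| = |\nabla\psi|$ a.e., so the Rayleigh quotient is unchanged and the support only shrinks). I would also normalize so that $\int_\Omega |\psi|^p\,dx = 1$, so that the claim becomes: there is a level set $S = \Omega_\psi(t)$ with $|\partial S|/|S| \leq p\,\mathcal{R}(\psi)^{1/p}$.

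First I would introduce the function $v = \psi^p \geq 0$, which is again in $W^{1,p}_0$ (indeed in $W^{1,1}_0$) with $\nabla v = p\,\psi^{p-1}\nabla\psi$, so that by Hölder's inequality
\begin{equation}\label{eq:holder-step}
\int_\Omega |\nabla v|\,dx = p\int_\Omega \psi^{p-1}|\nabla\psi|\,dx \leq p\left(\int_\Omega |\nabla\psi|^p\,dx\right)^{1/p}\left(\int_\Omega \psi^p\,dx\right)^{(p-1)/p} = p\,\mathcal{R}(\psi)^{1/p},
\end{equation}
using the normalization $\int_\Omega\psi^p = 1$. Next I would apply the coarea formula to $v$: writing $E_s = \{x \in \Omega : v(x) > s\} = \Omega_\psi(s^{1/p})$, one has $\int_\Omega|\nabla v|\,dx = \int_0^\infty |\partial E_s|\,ds$, while the layer-cake formula gives $\int_\Omega v\,dx = \int_0^\infty |E_s|\,ds = 1$. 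Therefore
\begin{equation}\label{eq:avg-step}
\inf_{s>0}\frac{|\partial E_s|}{|E_s|} \leq \frac{\int_0^\infty |\partial E_s|\,ds}{\int_0^\infty |E_s|\,ds} = \int_0^\infty |\partial E_s|\,ds \leq p\,\mathcal{R}(\psi)^{1/p},
\end{equation}
since if the ratio $|\partial E_s|/|E_s|$ exceeded the value $M := \int_0^\infty|\partial E_s|\,ds$ for every $s$, integrating $|\partial E_s| > M|E_s|$ would give $M > M\int_0^\infty|E_s|\,ds = M$, a contradiction. Hence some level set $S = E_{s_0} = \Omega_\psi(s_0^{1/p}) \subseteq \mathrm{Supp}\,\psi$ satisfies $\phi(S) \leq p\,\mathcal{R}(\psi)^{1/p}$, which is the assertion (with $\phi$ the same functional $S \mapsto |\partial S|/|S|$ used in the statement). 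The infimum is attained (or approached) along the one-parameter family, and one picks a value of $s_0$ realizing it, which exists since the map $t \mapsto \phi(\Omega_\psi(t))$ attains its minimum (cf. the definition of $t_{opt}$ in the text).

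The main technical obstacle is the justification of the coarea formula and the identity $\int_\Omega|\nabla v| = \int_0^\infty |\partial E_s|\,ds$ with $|\partial E_s|$ interpreted as the perimeter $P_\Omega(E_s)$ for a general function $v \in W^{1,1}_0(\Omega)$ on a merely Lipschitz (or just bounded open) domain $\Omega$: one must check that almost every level set has finite perimeter and that the relative perimeter in $\Omega$ (not the full perimeter in $\mathbb{R}^n$) is what appears, so that boundary pieces along $\partial\Omega$ do not contribute — this is exactly why the test-field definition of $P_\Omega$ with $\phi \in C^1_c(\Omega,\mathbb{R}^n)$ was set up in the introduction. A secondary point is handling the borderline of Hölder's inequality in \eqref{eq:holder-step} when $\psi^{p-1}$ or $|\nabla\psi|$ might fail integrability; this is dealt with by a routine truncation/approximation of $\psi$ by smooth compactly supported functions and passing to the limit, using lower semicontinuity of the perimeter. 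Once these measure-theoretic facts are in place, the argument is the short chain \eqref{eq:holder-step}–\eqref{eq:avg-step}.
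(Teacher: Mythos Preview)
Your proof is correct and follows essentially the same route as the paper's: both reduce to $\psi\ge 0$, pass to the auxiliary function $v=\psi^{p}$ (the paper writes it as $\Phi(\psi)=|\psi|^{p-1}\psi$), bound $\int_\Omega|\nabla v|$ via H\"older, and then apply the coarea/layer-cake averaging to extract a good level set. The only cosmetic differences are that the paper first carries out the coarea step for $C_0^\infty$ functions and passes to $W_0^{1,1}$ by density, whereas you fold this into the remarks at the end, and you normalize $\|\psi\|_p=1$ while the paper keeps track of the norm explicitly.
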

The proof can be found in the appendix of \cite{11}, we write it here for the reader's convenience.
\begin{proof}
Note that $|\nabla|\psi||\leq |\nabla \psi|$. We only need to show the conclusion for $\psi\geq 0$. Suppose first that $\omega\in C_0^{\infty}(\Omega)$. Then by the coarea formula and by Cavalieri's principle
$$
\int_{\Omega}|\nabla\omega|dx=\int_{-\infty}^{\infty}|\partial\Omega_\omega(t)|dt=\int_{-\infty}^{\infty}\frac{|\partial\Omega_\omega(t)|}{|\Omega_\omega(t)|}|\Omega_\omega(t)|dt
$$
\begin{equation}\label{inequality 1}\geq \inf\frac{|\partial \Omega_w(t)|}{|\Omega_w(t)|}\int_{-\infty}^{+\infty}| \Omega_w(t)|dt= \inf\frac{|\partial \Omega_w(t)|}{|\Omega_w(t)|}\int_{\Omega}|w|dx=\phi(\Omega_w{(t_{opt})})\int_{\Omega}|w|dx
\end{equation}
Since $C_0^{\infty}(\Omega)$ is dense in $W_0^{1,1}(\Omega)$, the above inequality also holds for $\omega\in W_0^{1,1}(\Omega)$ . Define $\Phi(\psi)=|\psi|^{p-1}\psi$. Then H\"{o}lder's inequality implies
$$
\int_{\Omega}|\nabla\Phi(\psi)|dx=p\int_{\Omega}|\psi|^{p-1}|\nabla\psi|dx\leq p\|\psi\|_p^{p-1}\|\nabla \psi\|_p.
$$
Meanwhile, (\ref{inequality 1}) applies and$$
\int_{\Omega}|\nabla\Phi(\psi)|\geq  \phi(\Omega_\Phi{(t_{opt})})\int_{\Omega}|\psi|^pdx.
$$Therefore, there exist a subset  $S=:\Omega_\Phi{(t_{opt})}\subseteq Supp\psi$, such that
$$
\phi(S)\leq \frac{\int_{\Omega}|\nabla\omega |dx}{\int_{\Omega}|\omega|dx}\leq \frac{p\|\psi\|_p^{p-1}\|\nabla \psi\|_p}{\int_{\Omega}|\psi|^pdx}=p\frac{\|\nabla \psi\|_p}{\|\psi\|_p}=p({\mathcal{R}(\psi)})^{\frac{1}{p}}.
$$
\end{proof}
Let $\displaystyle \mathcal{E}_f:=\int_{\Omega}|\nabla f|^pdx$. Then $\displaystyle\mathcal{R}(f)=\frac{\mathcal{E}_f}{\|f\|_p^p}$. To use the classical Cheeger's inequality for truncated functions, we introduce $\displaystyle \mathcal{E}_f(I):=\int_{\Omega_f(I)}|\nabla f|^pdx$.

\begin{Lem}\label{Lem-2}
For any function $f\in W_0^{1,p}(\Omega)$, and interval $I=[b,a]$ with $a>b\geq 0$, we have
$$
\mathcal{E}_f(I)\geq \frac{(\phi(f)|\Omega_f(a)||I|)^p}{|\Omega_f(I)|^{\frac{p}{q}}},
$$where $\frac{1}{p}+\frac{1}{q}=1$.
\end{Lem}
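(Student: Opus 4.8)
The plan is to adapt the truncation-and-coarea argument used in Lemma~\ref{lemma-1} to the ``slab'' $\Omega_f(I)$ in place of a single super-level set. Write $I=[b,a]$ with $a>b\ge 0$, so that $|I|=a-b$, and introduce the truncated function
\[
 g:=\bigl(\min(f,a)-b\bigr)^{+}.
\]
Then $g\in W_0^{1,p}(\Omega)$, $0\le g\le |I|$, and $g$ equals $f-b$ on $\{b<f<a\}$ while being constant on $\{f\le b\}$ (where $g=0$) and on $\{f\ge a\}$ (where $g=|I|$); hence $|\nabla g|=|\nabla f|$ a.e.\ on $\{b<f<a\}$ and $|\nabla g|=0$ a.e.\ elsewhere. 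Since the gradient of a Sobolev function vanishes a.e.\ on each of its level sets, $\nabla f=0$ a.e.\ on $\{f=b\}\cup\{f=a\}$, and consequently
\[
 \int_{\Omega}|\nabla g|\,dx=\int_{\Omega_f(I)}|\nabla f|\,dx .
\]

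Next I would bound $\int_\Omega|\nabla g|\,dx$ from below by the coarea formula, exactly as in the proof of Lemma~\ref{lemma-1} applied to $g$ (using density of $C_0^\infty(\Omega)$ in $W_0^{1,1}(\Omega)$, or the coarea identity $\int_\Omega|Dg|=\int_{\mathbb R}|\partial\Omega_g(t)|\,dt$ directly). For $0<t<|I|$ one checks that $\Omega_g(t)=\{g\ge t\}=\{f\ge b+t\}=\Omega_f(b+t)$; moreover $b+t<a$ gives $\Omega_f(a)\subseteq\Omega_f(b+t)$, hence $|\Omega_f(b+t)|\ge|\Omega_f(a)|$, and the definition $\phi(f)=\min_{t\in\mathbb R}\phi(\Omega_f(t))$ yields $|\partial\Omega_f(b+t)|\ge\phi(f)\,|\Omega_f(b+t)|\ge\phi(f)\,|\Omega_f(a)|$. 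Integrating over $t\in(0,|I|)$,
\[
 \int_\Omega|\nabla g|\,dx=\int_0^{|I|}|\partial\Omega_f(b+t)|\,dt\ \ge\ \phi(f)\,|\Omega_f(a)|\,|I| .
\]
Then I would bound the same quantity from above by H\"older's inequality with exponents $p$ and $q$:
\[
 \int_{\Omega_f(I)}|\nabla f|\,dx\ \le\ \Bigl(\int_{\Omega_f(I)}|\nabla f|^{p}\,dx\Bigr)^{1/p}\,|\Omega_f(I)|^{1/q}=\mathcal{E}_f(I)^{1/p}\,|\Omega_f(I)|^{1/q}.
\]
Combining the displays gives $\phi(f)\,|\Omega_f(a)|\,|I|\le\mathcal{E}_f(I)^{1/p}\,|\Omega_f(I)|^{1/q}$, and raising to the power $p$ and dividing by $|\Omega_f(I)|^{p/q}$ produces exactly $\mathcal{E}_f(I)\ge(\phi(f)\,|\Omega_f(a)|\,|I|)^{p}/|\Omega_f(I)|^{p/q}$, which is the claim.

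The routine pieces are the H\"older estimate and the final algebra. The one step that deserves care---the main potential obstacle---is the identity $\int_\Omega|\nabla g|\,dx=\int_{\Omega_f(I)}|\nabla f|\,dx$, i.e.\ the fact that $|\nabla f|$ may be integrated over the closed slab $\{b\le f\le a\}$ rather than the open one. This relies on $\nabla f=0$ a.e.\ on the level sets $\{f=b\}$ and $\{f=a\}$, the same fact that underlies the optimal threshold $t_{opt}$ in Lemma~\ref{lemma-1}; one should also confirm that the coarea formula is being applied with the relative perimeter $P_\Omega(\cdot)$, so that $\int_\Omega|Dg|=\int_{\mathbb R}|\partial\Omega_g(t)|\,dt$ holds verbatim for the Lipschitz truncation $g$, and that $g$ indeed lies in $W_0^{1,1}(\Omega)$ (which holds since $\Omega$ is bounded, $f\in W_0^{1,p}(\Omega)\subset W_0^{1,1}(\Omega)$, and $g$ is a $1$-Lipschitz function of $f$ vanishing where $f$ does).
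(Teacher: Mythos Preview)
Your argument is correct and essentially the same as the paper's: both combine the coarea identity $\int_{\Omega_f(I)}|\nabla f|\,dx=\int_I|\partial\Omega_f(t)|\,dt\ge\phi(f)\,|\Omega_f(a)|\,|I|$ with H\"older's inequality on $\Omega_f(I)$ and then raise to the $p$-th power. The only cosmetic difference is that you package the slab integral via the explicit truncation $g=(\min(f,a)-b)^+$ and work directly in $W_0^{1,p}$, whereas the paper writes the coarea formula on $\Omega_f(I)$ without naming $g$, proves it first for $f\in C_0^\infty(\Omega)$, and then invokes density as in Lemma~\ref{lemma-1}.
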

\begin{proof} We first prove it for $f\in C_0^\infty(\Omega)$.
By Coarea formula and Cavalieri's principle,
$$
\int_{\Omega_f(I)}|\nabla f|dx=\int_I|\partial \Omega_f(t)|dt=\int_I\frac{|\partial\Omega_f(t)|}{|\Omega_f(t)|}|\Omega_f(t)|dt
$$
$$
\qquad\qquad\qquad \geq \phi(f)\int_I|\Omega_f(t)|dt\geq \phi(f)|\Omega_f(a)||I|.
$$
The H\"{o}lder inequality gives
$$
\bigg(\int_{\Omega_f(I)}|\nabla f|dx\bigg)\leq \bigg(\int_{\Omega_f(I)}|\nabla f|^pdx\bigg)^{\frac{1}{p}}\bigg(\int_{\Omega_f(I)}dx\bigg)^{\frac{1}{q}}=\bigg(\int_{\Omega_f(I)}|\nabla f|^pdx\bigg)^{\frac{1}{p}}|\Omega_f(I)|^{\frac{1}{q}}.
$$
Combining above two inequalities, we get this Lemma for $f\in C_0^\infty(\Omega)$.
Arguments as in the proof of Lemma \ref{lemma-1} give this lemma for $f\in W_0^{1,p}(\Omega)$.
\end{proof}

\section{\sc Construction of separated functions }
\setcounter{equation}{0}

In this section, we will prove theorem \ref{theorem-1} and theorem \ref{theorem-2}. We use the method developed in \cite{2} for high-order Cheeger's inequality on graph.  Our proof consists of three steps. First, we will deal with the case of $n$-dimensional rectangle $\Omega=(a_1,b_1)\times(a_2,b_2)\times\cdots(a_n,b_n)\subset\mathbb{R}^n$ with single variable changed. Second, we extend to the case of $n$-dimensional rectangle $\Omega=(a_1,b_1)\times(a_2,b_2)\times\cdots(a_n,b_n)\subset\mathbb{R}^n$ with multi-variables changed. Finally, we discuss the general domain.

\subsection{\sc $n$-dimensional rectangle with single variable.}\label{subsection4.1}

For any non-negative function $f\in W_0^{1,p}(\Omega)$ with $\|f\|_{W_0^{1,p}}=1$.  In this subsection, we discuss $f(x_1,x_2,\cdots,x_l,\cdots,x_n)$ with $x_l$ changed and other variables unchanged. We denote $\delta:=(\frac{\phi^p(f)}{\mathcal{R}(f)})^{\frac{q}{p}}$. Given $I\subseteq \mathbb{R}^+$, let $L(I):=\displaystyle\int_{\{x\in\Omega| f(x)\in I\}}|f|^pdx.$  We say $I$ is $W$-dense if $L(I)\geq W$. For any $a\in \mathbb{R}^+$, we define
\begin{equation}\label{distance function}
dist(a,I):=\inf_{b\in I}\frac{|a-b|}{b}.
\end{equation}
The $\varepsilon$-neighborhood of a region $I$ is the set $N_\varepsilon(I):=\{a\in \mathbb{R}^+| dist(a,I)<\varepsilon\}$. If $N_\varepsilon(I_1)\cap N_\varepsilon(I_2)=\emptyset$, we say $I_1,I_2$ are $\varepsilon$-well separated.

\begin{Lem}\label{lem4.1}
Let $I_1,\cdots, I_{2k}$ be a set of $W$-dense and $\varepsilon$-well separated regions. Then, there are $k$ disjointly supported functions $f_1,\cdots,f_k$, each supported on the  $\varepsilon$-neighborhood of one of the regions such that $$
\mathcal{R}(f_i)\leq \frac{2^{p+1}\mathcal{R}(f)}{k\varepsilon^p W}, \forall 1\leq i\leq k.
$$
\end{Lem}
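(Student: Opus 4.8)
The plan is to manufacture each $f_i$ as a Lipschitz truncation of $f$ performed in ``value space'', designed so that its support lies inside $\Omega_f\big(N_\varepsilon(I_j)\big)$ for one of the $2k$ regions, and then to retain only the $k$ regions carrying the least $p$-energy $\int_\Omega|\nabla f|^p\,dx$; this follows the scheme of \cite{2}. Concretely, for each $j\in\{1,\dots,2k\}$ fix the Lipschitz cutoff $\eta_j\colon\mathbb{R}^+\to[0,1]$, $\eta_j(t):=\max\{0,\,1-dist(t,I_j)/\varepsilon\}$, so that $\eta_j\equiv1$ on $I_j$ and $\eta_j\equiv0$ outside $N_\varepsilon(I_j)$; when $I_j$ is an interval $[\beta_j,\alpha_j]$ this is just the function that rises linearly from $0$ to $1$ across the band $[\beta_j(1-\varepsilon),\beta_j]$, equals $1$ on $[\beta_j,\alpha_j]$, and decreases linearly back to $0$ across $[\alpha_j,\alpha_j(1+\varepsilon)]$. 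Set $\Psi_j(t):=t\,\eta_j(t)$ and $g_j:=\Psi_j(f)=f\cdot\eta_j(f)$. Since $\Psi_j$ is Lipschitz with $\Psi_j(0)=0$, we get $g_j\in W_0^{1,p}(\Omega)$ with $\nabla g_j=\Psi_j'(f)\,\nabla f$ a.e.; moreover $Supp(g_j)\subseteq\Omega_f(N_\varepsilon(I_j))$, and as the regions are $\varepsilon$-well separated these supports are pairwise disjoint, so any subfamily of the $g_j$ is disjointly supported.

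Two estimates then drive the proof. First, since $\eta_j\equiv1$ on $I_j$ we have $g_j=f$ on $\Omega_f(I_j)$, whence
$$
\|g_j\|_p^p\ \ge\ \int_{\Omega_f(I_j)}|f|^p\,dx\ =\ L(I_j)\ \ge\ W
$$
by $W$-density. Second, a short computation gives $|\Psi_j'|\le(1+\varepsilon)/\varepsilon\le 2/\varepsilon$ on $\mathbb{R}^+$, together with $\Psi_j'=0$ outside $N_\varepsilon(I_j)$; hence
$$
\|\nabla g_j\|_p^p=\int_\Omega|\Psi_j'(f)|^p|\nabla f|^p\,dx\ \le\ \frac{2^p}{\varepsilon^p}\int_{\Omega_f(N_\varepsilon(I_j))}|\nabla f|^p\,dx.
$$

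It remains to pick $k$ good regions. The sets $\Omega_f(N_\varepsilon(I_j))$, $1\le j\le2k$, being pairwise disjoint, $\sum_{j=1}^{2k}\int_{\Omega_f(N_\varepsilon(I_j))}|\nabla f|^p\,dx\le\|\nabla f\|_p^p$, so a Markov/pigeonhole argument produces at least $k$ indices $j$ with $\int_{\Omega_f(N_\varepsilon(I_j))}|\nabla f|^p\,dx\le\frac1k\|\nabla f\|_p^p$. Relabelling the corresponding functions as $f_1,\dots,f_k$ (still disjointly supported, each supported inside some $\Omega_f(N_\varepsilon(I_j))$), and using $\|f\|_p\le\|f\|_{W_0^{1,p}}=1$ so that $\|\nabla f\|_p^p\le\|\nabla f\|_p^p/\|f\|_p^p=\mathcal{R}(f)$, we combine the two estimates to obtain
$$
\mathcal{R}(f_i)=\frac{\|\nabla f_i\|_p^p}{\|f_i\|_p^p}\ \le\ \frac{2^p}{\varepsilon^p}\cdot\frac{\|\nabla f\|_p^p}{kW}\ \le\ \frac{2^{p+1}\,\mathcal{R}(f)}{k\,\varepsilon^p\,W},\qquad 1\le i\le k,
$$
which is the assertion (with a little room to spare). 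The step I expect to need the most care is the Lipschitz bound $|\Psi_j'|\le 2/\varepsilon$: this is where the multiplicative distance $dist(\cdot,I_j)$ enters, since $dist(t,I_j)<\varepsilon$ forces $t$ to stay comparable to the nearby points of $I_j$ on the transition bands; one must also invoke (the standard but worth stating) fact that composing the Lipschitz map $\Psi_j$ with the merely Sobolev function $f$ stays in $W_0^{1,p}(\Omega)$ with $\nabla\Psi_j(f)=\Psi_j'(f)\nabla f$. Everything else is bookkeeping.
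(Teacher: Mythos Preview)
Your proof is correct and follows essentially the same approach as the paper: the truncations $g_j=f\cdot\max\{0,1-dist(f,I_j)/\varepsilon\}$ are identical, the lower bound $\|g_j\|_p^p\ge W$ and the gradient bound via $|\Psi_j'|\le(1+\varepsilon)/\varepsilon\le 2/\varepsilon$ match, and the selection of $k$ good indices by pigeonhole is the same device the paper calls ``an averaging argument.'' The one small refinement is that you localize the energy to the disjoint sets $\Omega_f(N_\varepsilon(I_j))$ before summing, which gives $\sum_j\int_{\Omega_f(N_\varepsilon(I_j))}|\nabla f|^p\le\|\nabla f\|_p^p$ and hence the bound $2^p$ rather than $2^{p+1}$; the paper instead bounds each $\|\nabla f_j\|_p^p$ by $(2/\varepsilon)^p\|\nabla f\|_p^p$ individually and sums, losing that factor of $2$.
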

\begin{proof}
For any $1\leq i\leq 2k$, we define the truncated function
$$
f_i(x):=f(x)\max\{0,1-\frac{dist(f(x),I_i)}{\varepsilon}\}.
$$
Then $\|f_i\|_p^p\geq L(I_i)$.  Noting that the regions are $\varepsilon$-well separated, the functions are disjointly supported.  By an averaging argument, there exist $k$ functions $f_1,\cdots, f_k$ (after renaming) satisfy the following.
$$
 \int_{\Omega}|\nabla f_i|^pdx\leq \frac{1}{k}\sum_{j=1}^{2k}\int_{\Omega}|\nabla f_j|^pdx, \quad 1\leq i\leq k.
 $$
 By the construction of distance and $I_i\subset \mathbb{R}^1$, we know that $$
 \int_{\Omega}|\nabla \max\{0,1-\frac{dist(f(x),I_i)}{\varepsilon}\}|^p|f(x)|^pdx\leq (\frac{1+\varepsilon}{\varepsilon})^p\int_{\Omega}|\nabla f(x)|^pdx.
 $$
 Therefore $$
 \int_{\Omega}|\nabla f_i(x)|^pdx\leq (1+(\frac{1+\varepsilon}{\varepsilon})^p)\int_{\Omega}|\nabla f(x)|^pdx\leq (\frac{2}{\varepsilon})^p\int_{\Omega}|\nabla f|^pdx.
 $$
  Thus, for $1\leq i\leq k$,$$
  \mathcal{R}(f_i)=\displaystyle\frac{\|\nabla f_i\|_p^p}{\|f_i\|_p^p}\leq \frac{\displaystyle\sum_{j=1}^{2k}\int_{\Omega}|\nabla f_j|^pdx}{\displaystyle k\min_{i\in[1,2k]}\|f_i\|_p^p}\leq \frac{\displaystyle 2 (2^p\int_{\Omega}|\nabla f|^pdx)}{k\varepsilon^pW}=\frac{ 2^{p+1}\mathcal{R}(f)}{k\varepsilon^pW}.$$
\end{proof}

Let $0<\alpha<1$ be a constant that will be fixed later. For $i\in \mathbb{Z}$, we define the interval $I_i:=[\alpha^{i+1},\alpha^i]$. We let $L_i:=L(I_i)$. We partition each interval $I_i$ into $12k$ subintervals of equal length.
$$I_{i,j}=[\alpha^i(1-\frac{(j+1)(1-\alpha)}{12k}),\alpha^i(1-\frac{j(1-\alpha)}{12k})],\ \ \mbox{for}\ 0\leq j\leq 12k.$$
So that $\displaystyle |I_{i,j}|=\frac{\alpha^i(1-\alpha)}{12k}$. Set $L_{i,j}=L(I_{i,j})$. We say a subinterval $I_{i,j}$ is heavy, if $\displaystyle L_{i,j}\geq \frac{c\delta L_{i-1}}{k}$, where $c>0$ is a constant determined later. Otherwise we say it is light. We use $\mathcal{H}_i$ to denote the set of heavy subintervals of $I_i$ and $\mathcal{L}_i$ for the set of light subintervals. Let $h_i:=\sharp \mathcal{H}_i$ the number of heavy subintervals.
If $h_i\geq 6k$, we say $I_i$ is balanced, denoted by $I_i\in \mathcal{B}$.

Using Lemma \ref{lem4.1}, it is sufficient to find $2k$, $\displaystyle\frac{\delta}{k}$-dense, $\displaystyle\frac{1}{k}$ well-separated regions$R_1,R_2,\cdots R_{2k}$, such that each regions are  unions of heavy subintervals.
We will use the following strategy: from each balanced interval we choose $2k$ separated heavy subintervals and include each of them in one of the regions. In order to keep that the regions are well separated, once we include $I_{i,j}\in \mathcal{H}_i $ into a region $R$ we leave the two neighboring subintervals $I_{i,j-1}$ and $I_{i,j+1}$ unassigned, so as to separate $R$ form the rest of the regions.  In particular, for all $1\leq a\leq 2k$ and all $I_{i}\in \mathcal{B}$, we include the $(3a-1)$-th heavy subinterval of $I_i$ in $R_a$.  $R_a:=\cup_{I_i\in\mathcal{B} }I_{i,a}$.  Note that if an interval is balanced, then it has $6k$ heavy subintervals and we can include one heavy subinterval in each of the $2k$ regions. Moreover, by the construction of the distance function (\ref{distance function}),  the regions are $\frac{1-\alpha}{12k}$-well separated. It remains to prove that these $2k$ regions are dense. Let
$$
\Delta:=\sum_{I_{i}\in \mathcal{B}}L_{i-1}.
$$
Then, since each heavy subinterval $I_{i,j}$ has a mass of $\frac{c \delta L_{i-1}}{k}$, by the construction all regions are $\frac{c\delta\Delta}{k}$-dense.

Therefore, we have the following lemma.
\begin{Lem}\label{Lem3.2}
There are $k$ disjoint supported functions $f_1,\cdots, f_k$, such that for all $1\leq i\leq k$, $supp(f_i)\subseteq supp(f)$, and
$$
\mathcal{R}(f_i)\leq \bigg(\frac{24k}{(1-\alpha)}\bigg)^p\frac{2 \mathcal{R}(f)}{c\delta\Delta }, \quad \forall 1\leq i\leq k.
$$
\end{Lem}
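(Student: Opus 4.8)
The plan is to assemble the ingredients prepared in the paragraphs preceding the statement. There one constructs, from the set $\mathcal{B}$ of balanced intervals, a family of $2k$ regions $R_1,\dots,R_{2k}\subseteq\mathbb{R}^+$, where $R_a$ is the disjoint union, over all $I_i\in\mathcal{B}$, of the $(3a-1)$-th heavy subinterval of $I_i$ (the two neighbouring subintervals being left unassigned, which produces the spacing). Given this, the proof has only three steps: (i) verify the $2k$ regions are $\varepsilon$-well separated with $\varepsilon:=\frac{1-\alpha}{12k}$; (ii) verify each $R_a$ is $W$-dense with $W:=\frac{c\delta\Delta}{k}$, where $\Delta=\sum_{I_i\in\mathcal{B}}L_{i-1}$; (iii) apply Lemma \ref{lem4.1} to these regions and simplify the resulting constant.

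For (i), the point is that within any balanced $I_i$ (which has at least $6k$ heavy subintervals) the indices $3a-1$, $1\le a\le 2k$, avoid the first and last subintervals of $I_i$ and any two of them differ by at least $3$. A short computation with the scale-invariant distance (\ref{distance function}) and $\varepsilon=\frac{1-\alpha}{12k}$ then shows both that the $\varepsilon$-neighbourhood of each chosen subinterval stays strictly inside the open interval $(\alpha^{i+1},\alpha^i)$ and that, inside a fixed $I_i$, the $\varepsilon$-neighbourhoods of subintervals chosen for distinct regions are disjoint; since the intervals $I_i$ are themselves disjoint, this yields $N_\varepsilon(R_a)\cap N_\varepsilon(R_b)=\emptyset$ for $a\ne b$. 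For (ii), each $R_a$ contains exactly one heavy subinterval per $I_i\in\mathcal{B}$, of mass at least $\frac{c\delta L_{i-1}}{k}$ by the definition of heaviness; as these subintervals are pairwise disjoint, $L(R_a)\ge\sum_{I_i\in\mathcal{B}}\frac{c\delta L_{i-1}}{k}=\frac{c\delta\Delta}{k}$, so every $R_a$ is $\frac{c\delta\Delta}{k}$-dense.

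For (iii), apply Lemma \ref{lem4.1} to the $2k$ regions $R_1,\dots,R_{2k}$ with $\varepsilon=\frac{1-\alpha}{12k}$ and $W=\frac{c\delta\Delta}{k}$: it produces $k$ disjointly supported functions $f_1,\dots,f_k$, each supported on $N_\varepsilon(R_a)$ for some $a$, with $\mathcal{R}(f_i)\le\frac{2^{p+1}\mathcal{R}(f)}{k\varepsilon^p W}$. Each $f_i$ is of the form $f\cdot g_i$ with $0\le g_i\le 1$, so $Supp(f_i)\subseteq Supp(f)$. It remains to simplify the constant: $k\varepsilon^p W=k\big(\frac{1-\alpha}{12k}\big)^p\frac{c\delta\Delta}{k}=\big(\frac{1-\alpha}{12k}\big)^p c\delta\Delta$, hence $\mathcal{R}(f_i)\le 2^{p+1}\big(\frac{12k}{1-\alpha}\big)^p\frac{\mathcal{R}(f)}{c\delta\Delta}=\big(\frac{24k}{1-\alpha}\big)^p\frac{2\mathcal{R}(f)}{c\delta\Delta}$, using $2^{p+1}12^p=2\cdot 24^p$.

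The step I expect to need the most care is (i): one must check, under the multiplicative distance (\ref{distance function}), that choosing the $(3a-1)$-th heavy subinterval (rather than an arbitrary heavy one) simultaneously keeps the chosen neighbourhoods away from the endpoints of each $I_i$ and leaves enough separation between different regions, so that all $2k$ neighbourhoods are pairwise disjoint; steps (ii) and (iii) are then bookkeeping and arithmetic. One should also record the degenerate case $\mathcal{B}=\emptyset$, i.e. $\Delta=0$, where the asserted bound is $+\infty$ and the statement is vacuous — the regime with few balanced intervals being handled separately in the proof of Theorem \ref{theorem-1}.
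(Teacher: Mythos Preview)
Your proposal is correct and follows exactly the paper's approach: the paper sets up the $2k$ regions $R_a$ from heavy subintervals of balanced intervals in the paragraphs immediately preceding the lemma, asserts they are $\frac{1-\alpha}{12k}$-well separated and $\frac{c\delta\Delta}{k}$-dense, and then states the lemma as an immediate consequence of Lemma~\ref{lem4.1} with these values of $\varepsilon$ and $W$. Your treatment is in fact more careful than the paper's, which simply writes ``by the construction of the distance function (\ref{distance function}), the regions are $\frac{1-\alpha}{12k}$-well separated'' without the verification you outline in step~(i); your arithmetic in step~(iii) matches the stated bound exactly.
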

Now we just need to lower bound $\Delta$ by an absolute constant.

\begin{Prop}
For any interval $I_i\not\in \mathcal{B}$,
$$
\mathcal{E}(I_i)\geq \frac{6(\alpha^{p(1+\frac{1}{q})}(1-\alpha))^p\phi^p(f)L_{i-1}}{(12)^p(c\delta)^{\frac{p}{q}}},
$$where $\frac{1}{q}+\frac{1}{p}=1$.
\end{Prop}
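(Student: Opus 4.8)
The plan is to apply Lemma~\ref{Lem-2} to the light subintervals of $I_i$ and add up the resulting lower bounds for the $\mathcal E(I_{i,j})$. First note that, since $I_i\notin\mathcal B$, not all of the $12k$ subintervals are heavy, so by the definition of "heavy" both $L_{i-1}>0$ and $\delta>0$, and hence $\phi(f)>0$; in particular no degeneracy occurs below. Moreover $h_i<6k$ means at least $6k$ of $I_{i,0},\dots,I_{i,12k-1}$ are light; write $\mathcal L_i$ for this collection. The sets $\Omega_f(I_{i,j})$, $j=0,\dots,12k-1$, partition $\Omega_f(I_i)$ up to null sets, so $\mathcal E(I_i)=\sum_j\mathcal E(I_{i,j})\geq\sum_{j\in\mathcal L_i}\mathcal E(I_{i,j})$; it therefore suffices to bound each summand below by $1/(6k)$ times the right-hand side of the Proposition and then sum.

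Fix a light subinterval $I_{i,j}=[b_j,a_j]\subseteq I_i=[\alpha^{i+1},\alpha^i]$, where $a_j$ is the upper endpoint. Lemma~\ref{Lem-2} gives
\[
\mathcal E(I_{i,j})\ \geq\ \frac{\big(\phi(f)\,|\Omega_f(a_j)|\,|I_{i,j}|\big)^p}{|\Omega_f(I_{i,j})|^{p/q}},
\]
and I would estimate the three factors separately. The length is $|I_{i,j}|=\alpha^i(1-\alpha)/(12k)$ by construction. Since $a_j\leq\alpha^i$ one has $\Omega_f(a_j)\supseteq\Omega_f(\alpha^i)\supseteq\Omega_f(I_{i-1})$, and $f\leq\alpha^{i-1}$ on $\Omega_f(I_{i-1})$ gives $L_{i-1}\leq\alpha^{(i-1)p}|\Omega_f(I_{i-1})|$, hence $|\Omega_f(a_j)|\geq\alpha^{-(i-1)p}L_{i-1}$. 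Since $f\geq b_j\geq\alpha^{i+1}$ on $\Omega_f(I_{i,j})$ we get $L_{i,j}\geq\alpha^{(i+1)p}|\Omega_f(I_{i,j})|$, and, $I_{i,j}$ being light ($L_{i,j}<c\delta L_{i-1}/k$),
\[
|\Omega_f(I_{i,j})|\ \leq\ \alpha^{-(i+1)p}L_{i,j}\ <\ \alpha^{-(i+1)p}\,\frac{c\delta L_{i-1}}{k}.
\]

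Substituting these estimates into the bound from Lemma~\ref{Lem-2} and collecting exponents yields exactly the desired per-subinterval inequality: the power of $L_{i-1}$ is $p-\tfrac pq=p(1-\tfrac1q)=1$; the power of $k$ is $k^{p/q}/(12k)^p=1/\big((12)^p k\big)$; and the exponent of $\alpha$ is
\[
-(i-1)p^2+ip+(i+1)\tfrac{p^2}{q}\ =\ ip\big(1-p+\tfrac pq\big)+p^2\big(1+\tfrac1q\big)\ =\ p^2\big(1+\tfrac1q\big),
\]
since $1-p+\tfrac pq=1-p(1-\tfrac1q)=0$. Summing over the (at least) $6k$ light subintervals then gives the stated lower bound for $\mathcal E(I_i)$.

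I expect the one delicate point to be this last exponent bookkeeping: it is precisely the conjugacy relation $\tfrac1p+\tfrac1q=1$, i.e. $p-\tfrac pq=1$, that makes the $i$-dependence in the power of $\alpha$ cancel, so the exponents must be tracked carefully. I would also note that the estimates above force $|\Omega_f(I_{i,j})|>0$ for each light $I_{i,j}$ (otherwise $\int_{\Omega_f(I_{i,j})}|\nabla f|\,dx=0$ would force $\phi(f)|\Omega_f(a_j)||I_{i,j}|=0$, contradicting $|\Omega_f(a_j)|\geq\alpha^{-(i-1)p}L_{i-1}>0$), so there is no division by zero in Lemma~\ref{Lem-2}; the passage from $C_0^\infty$ to $W_0^{1,p}$ is already contained in that lemma.
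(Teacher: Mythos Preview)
Your proof is correct and follows essentially the same route as the paper's: bound $\mathcal E(I_{i,j})$ for each light subinterval via Lemma~\ref{Lem-2}, using lightness to control $|\Omega_f(I_{i,j})|$ from above and $L_{i-1}\leq\alpha^{(i-1)p}|\Omega_f(\alpha^i)|$ to control the level set from below, then sum over the at least $6k$ light subintervals. The only cosmetic difference is that the paper routes the estimate through $|\Omega_f(\alpha^i)|$ as an intermediate quantity, whereas you keep everything in terms of $L_{i-1}$; your explicit exponent bookkeeping and the non-degeneracy remarks ($L_{i-1}>0$, $|\Omega_f(I_{i,j})|>0$) are additions the paper omits.
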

\begin{proof}
Claim: For any light interval $I_{i,j}$,$$
\mathcal{E}(I_{i,j})\geq \frac{(\alpha^{p(1+\frac{1}{q})}(1-\alpha))^p\phi^p(f)L_{i-1}}{(c\delta)^{\frac{p}{q}}(12)^pk}.
$$

Indeed, observe that $$
L_{i-1}=\int_{\Omega_f(I_{i-1})}|f(x)|^pdx\leq |\alpha^{i-1}|^p|\Omega_f(I_{i-1})|\leq  |\alpha^{i-1}|^p|\Omega_f(\alpha^i)|.
$$
Thus
$$
|\Omega_f(I_{i,j})|=\int_{\Omega_f(I_{i,j})}dx\leq \int_{\Omega_f(I_{i,j})}\frac{|f(x)|^p}{|\alpha^{i+1}|^p}dx=\frac{1}{(\alpha^{i+1})^p} \int_{\Omega_f(I_{i,j})}|f(x)|^pdx
$$
$$
\qquad =\frac{1}{(\alpha^{i+1})^p}L_{i,j}\leq \frac{c\delta L_{i-1}}{k(\alpha^{i+1})^p}\leq  \frac{c \delta |\alpha^{i-1}|^p|\Omega_f(I_{i-1})|}{k(\alpha^{i+1})^p}\leq \frac{c\delta |\Omega_f(\alpha^i)|}{k\alpha^{2p}}.
$$where we use the assumption that $I_{i,j}\in \mathcal{L}_i$. Therefore, by Lemma \ref{Lem-2},
$$
\mathcal{E}(I_{i,j})\geq \frac{(\phi(f)|\Omega_f(\alpha^i)||I_{i,j}|)^p}{|\Omega_f(I_{i,j})|^{\frac{p}{q}}}\geq \frac{(k\alpha^{2p})^{\frac{p}{q}}(\phi(f)|\Omega_f(\alpha^i)||I_{i,j}|)^p}{(c\delta |\Omega_f(\alpha^i)|)^{\frac{p}{q}}}=\frac{(k\alpha^{2p})^{\frac{p}{q}}|\Omega_f(\alpha^i)|(\phi(f)|I_{i,j}|)^p}{(c\delta)^{\frac{p}{q}}}.
$$
Note that $|I_{i,j}|=\frac{\alpha^i(1-\alpha)}{12k}$, we have
$$
\mathcal{E}(I_{i,j})\geq (\frac{k\alpha^{2p}}{c\delta})^{\frac{p}{q}}\frac{L_{i-1}}{|\alpha^{i-1}|^p}\bigg(\phi(f)\frac{\alpha^i(1-\alpha)}{12k}\bigg)^p
=\frac{L_{i-1}(\phi(f)\alpha^{p(1+\frac{1}{q})}(1-\alpha))^p}{k({c\delta})^{\frac{p}{q}}(12)^p}.
$$
Therefore, we get the Claim.

Now, since the subintervals are disjoint,
$$
\mathcal{E}(I_i)\geq \sum_{I_{i,j}\in\mathcal{L}_i}\mathcal{E}(I_{i,j})\geq (12k-h_i) \frac{L_{i-1}(\phi(f)\alpha^{p(1+\frac{1}{q})}(1-\alpha))^p}{k({c\delta})^{\frac{p}{q}}(12)^p}\geq \frac{6L_{i-1}(\phi(f)\alpha^{p(1+\frac{1}{q})}(1-\alpha))^p}{({c\delta})^{\frac{p}{q}}(12)^p},
$$where we used the assumption that $I_i$ is not balanced and thus $h_i<6k$.
\end{proof}

Now, it is time to lower-bound $\Delta$.

Note that $\|f\|_p=1$.$$
\mathcal{R}(f)=\mathcal{E}(f)\geq \sum_{I_i\not\in \mathcal{B}}\mathcal{E}(I_i)\geq \frac{6(\phi(f)\alpha^{p(1+\frac{1}{q})}(1-\alpha))^p}{({c\delta})^{\frac{p}{q}}(12)^p}\sum_{I_i\not\in \mathcal{B}}L_{i-1}.
$$
Therefore,
$$
\sum_{I_i\not\in \mathcal{B}}L_{i-1}\leq \frac{({c\delta})^{\frac{p}{q}}(12)^p\mathcal{R}(f)}{6(\phi(f)\alpha^{p(1+\frac{1}{q})}(1-\alpha))^p}.
$$
Set $\alpha=\frac{1}{2}$ and $c^{\frac{p}{q}}:=\frac{3(\alpha^{p(1+\frac{1}{q})}(1-\alpha))^p}{(12)^p}$. From the above inequality and the definition of $\delta$, we get
$$
\sum_{I_i\not\in \mathcal{B}}L_{i-1}\leq\frac{1}{2}.
$$
Note that $1=\|f\|_p^p=\sum_{I_i\in \mathcal{B}}L_{i-1}+\sum_{I_i\not\in \mathcal{B}}L_{i-1}$.  Thus, $$
\Delta=\sum_{I_i\in \mathcal{B}}L_{i-1}\geq \frac{1}{2}.
$$
 Then, by Lemma \ref{Lem3.2} and the definition of $\delta$, we get
 $$
 \mathcal{R}(f_i)\leq \frac{2\mathcal{R}(f)}{c\delta \Delta}(48k)^{p}\leq Ck^{p}\bigg(\frac{\mathcal{R}(f)}{\phi(f)}\bigg)^q.
 $$

Therefore, we have
\begin{theorem}\label{th4}
For any non-negative function $f\in W_{0}^{1,p}(\Omega)$, there are $k$ disjoint supported functions $f_1,\cdots, f_k$, such that for all $1\leq i\leq k$, $supp(f_i)\subseteq supp(f)$, and
$$
\mathcal{R}(f_i)\leq Ck^{p}\bigg(\frac{\mathcal{R}(f)}{\phi(f)}\bigg)^q, \quad \forall\ 1\leq i\leq k,
$$where $C$ depends only on $p$.
\end{theorem}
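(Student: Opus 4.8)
The plan, adapting the graph argument of \cite{2} to the continuous setting, is to run a level-set decomposition of $f$ on a geometric scale and then feed a family of well-separated, mass-carrying pieces into Lemma~\ref{lem4.1}. Since $\mathcal{R}$ and $\phi$ are scale invariant in $f$, I may normalize $\|f\|_p=1$, so that $\mathcal{R}(f)=\mathcal{E}(f)$ and $\sum_i L_{i-1}=1$. Fix $\alpha\in(0,1)$ and set $I_i=[\alpha^{i+1},\alpha^i]$, refining each $I_i$ into $12k$ equal subintervals $I_{i,j}$ of length $\alpha^i(1-\alpha)/(12k)$. Declare $I_{i,j}$ \emph{heavy} if its $p$-mass satisfies $L_{i,j}\ge c\delta L_{i-1}/k$ and \emph{light} otherwise, where $\delta=(\phi^p(f)/\mathcal{R}(f))^{q/p}$ and $c>0$ is a constant to be pinned down at the end; call $I_i$ \emph{balanced} ($I_i\in\mathcal{B}$) when it has at least $6k$ heavy subintervals. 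Heavy subintervals are the raw material for the dense regions needed by Lemma~\ref{lem4.1}; light ones, I will show, must each store a definite quantum of Dirichlet energy, which caps how many of them can occur.

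The analytic core is the energy estimate of the Proposition above: if $I_i\notin\mathcal{B}$ then $\mathcal{E}(I_i)\gtrsim \phi^p(f)L_{i-1}/(c\delta)^{p/q}$ up to constants depending only on $p$ and the fixed $\alpha$. To get this I would start from a light $I_{i,j}$, use the trivial bound $L_{i-1}\le |\alpha^{i-1}|^p\,|\Omega_f(\alpha^i)|$ together with the lightness inequality $L_{i,j}\le c\delta L_{i-1}/k$ to control the Lebesgue measure $|\Omega_f(I_{i,j})|\le (\alpha^{i+1})^{-p}L_{i,j}\lesssim c\delta|\Omega_f(\alpha^i)|/(k\alpha^{2p})$, and then invoke Lemma~\ref{Lem-2} with $I=I_{i,j}$: a small sublevel measure forces a large energy, giving $\mathcal{E}(I_{i,j})\gtrsim (k\alpha^{2p}/c\delta)^{p/q}|\Omega_f(\alpha^i)|(\phi(f)|I_{i,j}|)^p$, and substituting $|I_{i,j}|$ and $L_{i-1}$ yields $\mathcal{E}(I_{i,j})\gtrsim \phi^p(f)L_{i-1}/((c\delta)^{p/q}(12)^p k)$. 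Summing over the more than $6k$ light subintervals of an unbalanced $I_i$ gives the Proposition. Then, from $\mathcal{R}(f)=\mathcal{E}(f)\ge\sum_{I_i\notin\mathcal{B}}\mathcal{E}(I_i)$ and the definition of $\delta$, one finds $\sum_{I_i\notin\mathcal{B}}L_{i-1}\lesssim (c\delta)^{p/q}\mathcal{R}(f)/\phi^p(f)$; fixing $\alpha=\tfrac12$ and then choosing $c$ via $c^{p/q}=3(\alpha^{p(1+1/q)}(1-\alpha))^p/(12)^p$ makes the right-hand side at most $\tfrac12$. Since the total mass is $1$, this forces $\Delta:=\sum_{I_i\in\mathcal{B}}L_{i-1}\ge\tfrac12$.

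Finally I would extract the regions. From each balanced $I_i$, put its $(3a-1)$-th heavy subinterval into region $R_a$ for $a=1,\dots,2k$, leaving the two neighbours of each chosen subinterval unassigned; the $6k$ heavy subintervals available make this possible, and by the definition of $\mathrm{dist}(\cdot,\cdot)$ the regions $R_1,\dots,R_{2k}$ are $\tfrac{1-\alpha}{12k}$-well separated, while each $R_a$, being a union of heavy subintervals over all of $\mathcal{B}$, is $\tfrac{c\delta\Delta}{k}$-dense, hence $\tfrac{c\delta}{2k}$-dense. Applying Lemma~\ref{lem4.1} (equivalently Lemma~\ref{Lem3.2}, which already packages this construction) with $\varepsilon=\tfrac{1-\alpha}{12k}$ and $W=\tfrac{c\delta\Delta}{k}$ produces $k$ disjointly supported $f_1,\dots,f_k$ with $\mathrm{supp}(f_i)\subseteq\mathrm{supp}(f)$ and $\mathcal{R}(f_i)\le \tfrac{2^{p+1}\mathcal{R}(f)}{k\varepsilon^pW}$; plugging in the chosen $\varepsilon,W,\Delta$ and then $\delta=(\phi^p(f)/\mathcal{R}(f))^{q/p}$ collapses the bound, since $\mathcal{R}(f)/\delta=(\mathcal{R}(f)/\phi(f))^q$, to $\mathcal{R}(f_i)\le Ck^p(\mathcal{R}(f)/\phi(f))^q$ with $C=C(p)$. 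The step I expect to be delicate is the energy estimate for light subintervals together with the calibration of the constant $c$: one must track the powers of $\alpha$ coming out of Lemma~\ref{Lem-2} and the various factors of $k$ so that, after summing over unbalanced intervals, the $c$-dependent constant can be set so the total unbalanced mass drops below $\tfrac12$ — once $\Delta\ge\tfrac12$ is in hand, everything afterward is a direct substitution.
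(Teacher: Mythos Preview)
Your proposal is correct and follows essentially the same route as the paper: the geometric level-set decomposition with the heavy/light and balanced/unbalanced dichotomy, the energy estimate for light subintervals via Lemma~\ref{Lem-2}, the calibration $\alpha=\tfrac12$ and $c^{p/q}=3(\alpha^{p(1+1/q)}(1-\alpha))^p/12^p$ forcing $\Delta\ge\tfrac12$, and the final appeal to Lemma~\ref{lem4.1} (packaged as Lemma~\ref{Lem3.2}) with $\varepsilon=(1-\alpha)/(12k)$, $W=c\delta\Delta/k$ all match the paper's argument step by step. Your identification of the delicate point---tracking the $\alpha$ and $k$ powers out of Lemma~\ref{Lem-2} so that the constant $c$ can be tuned---is exactly where the paper's bookkeeping is concentrated.
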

\begin{remark}
The above arguments can also be used in general dimension $n>1$ without any modification.
\end{remark}

\subsection{\sc General $n$-dimensional cases.}

 Using arguments as in above subsection, we will first discuss the case of $n$-dimensional rectangle $\Omega=(a_1,b_1)\times(a_2,b_2)\times\cdots(a_n,b_n)\subset\mathbb{R}^n$ with multi-variables changed. Then,  we deal with the general domain by comparing the volume of $\Omega$ and the inscribed rectangle.

When $\Omega$ is an $n$-dimensional rectangle $\Omega=(a_1,b_1)\times(a_2,b_2)\times\cdots(a_n,b_n)\subset\mathbb{R}^n$. we get a similar theorem as  Theorem\ref{th4}.

\begin{theorem}\label{thm4.2}
For the first eigenfunction $f\in W_{0}^{1,p}(\Omega)$, there are $k^n$ disjoint supported functions $f_{i,j}(x)$, such that for all $1\leq i\leq k, 1\leq j\leq n$, $supp(f_{i,j}(x))\subseteq supp(f(x))$, and
$$
\mathcal{R}(f_{i,j}(x))\leq Ck^{p}\bigg(\frac{\mathcal{R}(f)}{\phi(f)}\bigg)^q, \quad \forall\ 1\leq i\leq k,\ 1\leq j\leq n,
$$where $C$ depends only on $n, p$.
\end{theorem}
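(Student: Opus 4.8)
The plan is to reduce the $n$-dimensional box to $n$ copies of the one-dimensional problem already solved in Theorem~\ref{th4} (which, by the remark following it, holds verbatim for a function of a single real variable). Write the underlying rectangle as $\Omega=(a_1,b_1)\times\cdots\times(a_n,b_n)$, and for each $j$ let $\phi_j\in W_0^{1,p}((a_j,b_j))$ be the first eigenfunction of the one-dimensional $p$-Laplacian on $(a_j,b_j)$. Applying Theorem~\ref{th4} with parameter $k$ to each $\phi_j$ produces $k$ pairwise disjointly supported functions $\phi_{j,1},\dots,\phi_{j,k}\in W_0^{1,p}((a_j,b_j))$ with $\mathcal{R}(\phi_{j,i})\le Ck^p(\mathcal{R}(\phi_j)/\phi(\phi_j))^q$ for every $i$. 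For a multi-index $I=(i_1,\dots,i_n)\in\{1,\dots,k\}^n$ set $u_I(x):=\prod_{j=1}^n\phi_{j,i_j}(x_j)$. This gives $k^n$ functions; each lies in $W_0^{1,p}(\Omega)$ (in one dimension $W^{1,p}$ functions are absolutely continuous and bounded, so forming the product raises no issue), and $u_I$ and $u_{I'}$ are disjointly supported when $I\ne I'$ because they already differ in some coordinate $j$. Since the first eigenfunction $f$ of $\Omega$ is strictly positive we have $Supp(f)=\Omega\supseteq Supp(u_I)$, so the support requirement is automatic, and it remains to bound $\mathcal{R}(u_I)$.

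For the energy, expand $|\nabla u_I(x)|^2=\sum_{j=1}^n|\phi_{j,i_j}'(x_j)|^2\prod_{l\ne j}|\phi_{l,i_l}(x_l)|^2$ and use the elementary inequality $(\sum_j a_j)^{p/2}\le c_{n,p}\sum_j a_j^{p/2}$ for $a_j\ge0$ (with $c_{n,p}$ depending only on $n,p$: convexity of $t\mapsto t^{p/2}$ when $p\ge2$, subadditivity when $1<p<2$). Together with Fubini this yields $\int_\Omega|\nabla u_I|^p\le c_{n,p}\sum_j(\int_{a_j}^{b_j}|\phi_{j,i_j}'|^p)\prod_{l\ne j}(\int_{a_l}^{b_l}|\phi_{l,i_l}|^p)$, while $\int_\Omega|u_I|^p=\prod_j\int_{a_j}^{b_j}|\phi_{j,i_j}|^p$; dividing gives $\mathcal{R}(u_I)\le c_{n,p}\sum_{j=1}^n\mathcal{R}(\phi_{j,i_j})$. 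Now insert the bound from Theorem~\ref{th4}. By scaling $\mathcal{R}(\phi_j)=\lambda_1(p,(a_j,b_j))=c_p(b_j-a_j)^{-p}$, and since every proper super-level set of the unimodal function $\phi_j$ is an admissible competitor in the one-dimensional Cheeger infimum, $\phi(\phi_j)\ge h((a_j,b_j))=2(b_j-a_j)^{-1}$. Hence $(\mathcal{R}(\phi_j)/\phi(\phi_j))^q\le C(p)(b_j-a_j)^{(1-p)q}=C(p)(b_j-a_j)^{-p}$, the last equality being the identity $(1-p)q=-p$. Therefore $\mathcal{R}(u_I)\le C(n,p)\,k^p\sum_{j=1}^n(b_j-a_j)^{-p}$ for all $I$.

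It remains to replace $\sum_j(b_j-a_j)^{-p}$ by $\mathcal{R}(f)=\lambda_1(p,\Omega)$. Applying the one-variable Poincaré inequality in the $j$-th slot for a.e.\ fixed values of the other variables and integrating (Fubini) gives $\int_\Omega|\nabla u|^p\ge\int_\Omega|\partial_j u|^p\ge\lambda_1(p,(a_j,b_j))\int_\Omega|u|^p$ for all $u\in W_0^{1,p}(\Omega)$, so $\lambda_1(p,\Omega)\ge\lambda_1(p,(a_j,b_j))=c_p(b_j-a_j)^{-p}$ for each $j$; summing in $j$ yields $\sum_j(b_j-a_j)^{-p}\le (n/c_p)\mathcal{R}(f)$, hence $\mathcal{R}(u_I)\le C(n,p)k^p\mathcal{R}(f)$. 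Finally, Lemma~\ref{lemma-1} applied to $f$ furnishes a super-level set $S=\Omega_f(s)$ with $\phi(S)\le p\,\mathcal{R}(f)^{1/p}$, and since $\phi(f)=\min_t\phi(\Omega_f(t))\le\phi(S)$ we get $\phi(f)\le p\,\mathcal{R}(f)^{1/p}$; using $q-1=q/p$ this rearranges to $\mathcal{R}(f)\le p^q(\mathcal{R}(f)/\phi(f))^q$. Combining, $\mathcal{R}(u_I)\le C(n,p)\,k^p(\mathcal{R}(f)/\phi(f))^q$, which is the assertion after relabeling the $k^n$ functions $u_I$ as the $f_{i,j}$.

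I expect the only delicate point to be the tensorization in the second paragraph together with checking that no constant depends on the side lengths $b_j-a_j$; this works precisely because of the scale invariance encoded in $(1-p)q=-p$, which makes the $(b_j-a_j)$-dependence of $\mathcal{R}(\phi_j)/\phi(\phi_j)$ cancel against that of $\lambda_1(p,\Omega)$. A shorter alternative bypasses the one-dimensional iteration altogether: partition $\Omega$ into $k^n$ congruent sub-boxes $\Omega_I$, each a translate of $\tfrac1k\Omega$, and take $f_I$ to be the first eigenfunction of $\Omega_I$ extended by $0$; then the $p$-homogeneous scaling of $-\Delta_p$ gives $\mathcal{R}(f_I)=\lambda_1(p,\Omega_I)=k^p\lambda_1(p,\Omega)=k^p\mathcal{R}(f)$ directly, and one concludes with the same application of Lemma~\ref{lemma-1}. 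The real work is postponed to the general-domain case, where no canonical chopping exists and the ``heavy/light/balanced'' level-set machinery of Section~\ref{subsection4.1} must be run on the comparable inscribed rectangle.
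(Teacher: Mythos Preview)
Your argument is correct, but it follows a genuinely different route from the paper's. The paper keeps the full eigenfunction $f$ on $\Omega$ as the base object and, for each coordinate direction $j$, reruns the heavy/light/balanced level-set construction of Section~\ref{subsection4.1} to produce cutoff multipliers $\theta_{i,j}(x)=\max\{0,1-dist(f(x),I_i)/\varepsilon\}$; the functions are then $f_{i,j}=f\cdot\theta_{i,j}$, and the Rayleigh bound is inherited directly from Theorem~\ref{th4} applied ``one variable at a time'' to $f$ itself. In contrast, you abandon $f$ entirely for the construction: you build the $k^n$ test functions as tensor products of one-dimensional truncations $\phi_{j,i_j}$ coming from Theorem~\ref{th4} applied to the 1D eigenfunctions $\phi_j$, and only reconnect to $f$ at the very end via the two comparisons $\sum_j(b_j-a_j)^{-p}\le(n/c_p)\lambda_1(p,\Omega)=\mathcal R(f)$ and $\mathcal R(f)\le p^q(\mathcal R(f)/\phi(f))^q$ from Lemma~\ref{lemma-1}.

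What each approach buys: the paper's method never uses the product structure of the box, so it is in principle closer to a direct argument for general domains (though as written the dependence of $\theta_{i,j}$ on $j$ is not made explicit). Your tensor-product route is cleaner and makes the bookkeeping transparent; the key scale-invariance identity $(1-p)q=-p$ does the work of eliminating the side lengths, and the 1D Poincar\'e/Fubini step is elementary. Your alternative at the end---chopping $\Omega$ into $k^n$ congruent sub-boxes and using $\lambda_1(p,\Omega_I)=k^p\lambda_1(p,\Omega)$---is even shorter and bypasses Theorem~\ref{th4} altogether for this statement; it is a legitimate and arguably preferable proof of Theorem~\ref{thm4.2} in its own right.
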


\begin{proof}
In the proof of Lemma \ref{lem4.1},we set $\theta_{i,j}(x)=\displaystyle\max\{0,1-\frac{dist(f(x_1,\cdots,x_j,\cdots,x_n),I_i)}{\varepsilon}\}$, where $1\leq i\leq k, 1\leq j\leq n$.
For each variable, discussing as in subsection \ref{subsection4.1}, we get $k^n$ support separated functions
$f_{i,j}(x)=f(x)\theta_{i,j}(x)$, where $1\leq i\leq k, 1\leq j\leq n$. By the construction, $supp(f_{i,j}(x))\subseteq supp(f(x))$ and $$
\mathcal{R}(f_{i,j}(x))\leq Ck^{p}\bigg(\frac{\mathcal{R}(f)}{\phi(f)}\bigg)^q, \quad \forall\ 1\leq i\leq k,\ 1\leq j\leq n,
$$where $C$ depends only on $n, p$. Therefore, we get the theorem.
\end{proof}

Finally, the case of a general bounded domain $\Omega$ with comparable inscribed n-dimensional rectangle $R\subset\Omega$, can be proved by comparison. More precisely, for the first eigenfunction $f$ of $R$, by Theorem \ref{thm4.2}, we can find $k^n$ functions $f_{i,j}(x)$. Noting that Lemma \ref{lemma-1}, we have $k^n$ subset $S_{i,j}\subset R\subset\Omega$, such that
$$
\phi(S_{i,j})\leq p(\mathcal{R}(f_{i,j}(x)))^{\frac{1}{p}}\leq C k \bigg(\frac{\mathcal{R}(f)}{\phi(f)}\bigg)^{\frac{q}{p}}.
$$
Redefining the subscript, by the definition of $h_k(\Omega)$, we have $$
h_k(\Omega)\leq h_k(R)\leq C k^{\frac{1}{n}}\bigg(\frac{\mathcal{R}(f)}{\phi(f)}\bigg)^{\frac{q}{p}}\leq C k^{\frac{1}{n}}\bigg(\frac{\lambda_{1}(p, R)}{h_1(R)}\bigg)^{\frac{q}{p}}.
$$
Therefore, we get theorem \ref{theorem-1}.

When $\Omega$ is convex,
(\ref{Faber-Krahn inequality}) and (\ref{weyl's asymptotic}) substituted into the above inequalities, we get
$$
h_k(\Omega)\leq C k^{\frac{1}{n}}\bigg(\frac{1}{|\Omega|}\bigg)^{\frac{(p-1)q}{np}}=C\bigg(\frac{k}{|\Omega|}\bigg)^{\frac{1}{n}}.
$$

Combining (\ref{Faber-Krahn inequality}) with the above inequality, we obtain Theorem \ref{theorem-2}.

Again, using (\ref{weyl's asymptotic}), there exist $C$, such that
$$
h_k^p(\Omega)\leq C \lambda_{k}(p,\Omega).
$$ Thus we prove corollary \ref{cor-1}.
\begin{center}

\end{center}
\end{document}